\newcommand{\note}[2][\null]{%
  \marginpar{\renewcommand{\baselinestretch}{1}\vspace{-1em}\hrule\vspace{3pt}%
  \footnotesize\raggedright\textsf{#2\ifx#1\null\else\\\hfill---
  {\em #1}\fi}\vspace{1.5em}}%
}
\newtheorem{theorem}{Theorem}[section]
\newtheorem{lemma}[theorem]{Lemma}
\newtheorem{definition}[theorem]{Definition}
\newtheorem{corollary}[theorem]{Corollary}
\newtheorem{example}[theorem]{Example}
\newcommand\RR{{\Bbb R}}
\newcommand\CC{{\Bbb C}}
\begin{document}
\title{Tiling by lattices for locally compact abelian groups}
 \author{Davide Barbieri, Eugenio Hern\'{a}ndez, Azita Mayeli
 }

 \date{\today}

\maketitle

\begin{abstract} For a locally compact  abelian group $G$ a simple proof is given for the
known fact that a bounded domain $\Omega$ tiles  $G$ with translations by a
lattice $\Lambda$ if and only if the set of characters of $G$
indexed by the dual lattice of $\Lambda$ is an orthogonal basis of
$L^2(\Omega).$ The proof uses simple techniques from Harmonic
Analysis.
\end{abstract}

\section{Introduction}

Let $G$ denote a locally compact and second countable abelian group.
A closed subgroup $\Lambda$  of $G$  is called a \emph{lattice} if
it is  discrete and  co-compact, i.e, the quotient group $G/\Lambda$
is compact. Since $G$ is second countable, then any discrete
subgroup of $G$ is also countable \cite[Section 12,
Example 17]{Pontryagin}.
We denote the dual group by $\widehat G$.  The dual lattice of
$\Lambda$ is defined as follows:
\begin{align}
\Lambda^\perp=\{ \xi\in \widehat G: \  \langle \xi,\lambda\rangle=1
\ \forall \lambda\in \Lambda\},
\end{align}
where $\langle \xi, \lambda\rangle$ indicates the action of  character $\xi$ on the group element $\lambda$.  \\

The dual lattice $\Lambda^\perp$ is a subgroup of $\widehat G$ and
by the duality theorem between subgroups and quotient groups
\cite[Lemma 2.1.2]{Rudin-FA}, it is  topologically isomorphic  to
the dual group of $G/\Lambda$, i.e., $\Lambda^\perp \cong
\widehat{G/\Lambda}$. Since $G/\Lambda$ is compact, then by the
isomorphic relation, the dual lattice $\Lambda^\perp$ is discrete.
Notice that also by \cite[Lemma 2.1.2]{Rudin-FA}, $\widehat G/\Lambda^\perp \cong \widehat\Lambda$.
This implies that $\Lambda^\perp$ is co-compact, hence it is a lattice. \\

Let $dg$ denote a Haar measure on $G$. For a function $f$ in
$L^1(G)$ the Fourier transform of $f$ is defined by
$$\mathcal F_G(f)(\chi) = \int_G f(g) \overline {\langle \chi, g\rangle}\,
dg\,, \qquad \chi\in \widehat G,$$ where $\langle \chi, g\rangle$
denotes the action of the character $\chi$ on $g.$ By the inversion
theorem \cite[Section 1.5.1]{Rudin-FA}, a Haar measure $d\chi$ can
be chosen on $\widehat G$ so that the Fourier transform $\mathcal
F_G$ is an isometry from $L^2(G)$ onto
$L^2(\widehat G).$ \\

For any $\chi\in \widehat G$, we define the {\it exponential
function} $e_{\chi}$ by
  $$e_{\chi}: G\to \CC, \quad \quad e_{\chi}(g):= \langle \chi, g\rangle. $$
For any subset   $\Omega$ of $G$, we let $|\Omega|$ denote the Haar
measure of $\Omega$. Throughout this paper, we let ${\bf 1}_\Omega$
denote the characteristic function of  set $\Omega$.  We shall also
use addition $`+\rq{}$ for the group action since $G$ is abelian.

\begin{definition}[Tiling]   We say a subset  $\Omega\subset G$   with positive and non-zero Haar measure, tiles
  $G$ with the  translations by a  lattice $\Lambda$ of $G$ if
   for almost every  $g\in G$ there is only one $\lambda\in \Lambda$ and only one $x\in \Omega$ such that $g$
   has the representation $g= x+\lambda$.  Notice that the uniqueness of the representations  implies that
   for any two lattice points  $\lambda_1\neq \lambda_2$,  the  sets  $\Omega+\lambda_1$ and $ \Omega+\lambda_2$
   do not intersect up to a measure zero set. In this case we say $(\Omega, \Lambda)$ is  a {\it tiling pair}.
   Equivalently, $(\Omega, \Lambda)$ is a tiling pair if

\begin{align}\label{tiling-property}
\sum_{\lambda\in\Lambda} {\bf 1}_\Omega(g-\lambda) = 1 \quad a.e. \
g\in  G\,.
\end{align}
\end{definition}

\begin{definition}[Spectrum] Let $\widetilde \Lambda$ be a countable subset
of $\widehat G$. We say $\widetilde \Lambda$ is a spectrum for
$\Omega$, if the exponentials $\{ e_{\tilde\lambda}:
\tilde\lambda\in \widetilde \Lambda\}$ form an orthogonal basis for
$L^2(\Omega)$.
\end{definition}

In the sequel, we  will assume that $\Omega$ has positive measure
and $\Lambda$ is a lattice in $G.$ Therefore, the annihilator
$\Lambda^\perp$ is also a lattice in $\widehat G$. Our main  result
is the following.

 \begin{theorem}[Main Result]\label{main} Let $\Lambda$ and $\Omega$ be as above. Then the following are
 equivalent:
 \begin{enumerate}
 \item[(1)]\label{(1)} The set $\Omega$ tiles $G$ with translations by the lattice $\Lambda$.
 \item[(2)]\label{(2)} $|\Omega| = |Q_\Lambda|$, and the system of translations $\{\sqrt{|\Omega|}^{~-1}{\bf
1}_\Omega (\cdot-\lambda):
  \ \lambda\in \Lambda\}$ is an orthonormal system in $L^2(G)$.
  \item[(3)]\label{(3)} $|\Omega| = |Q_\Lambda|$, and $$\sum_{\tilde\lambda\in \Lambda^\perp} |\mathcal F_G({\bf 1}_\Omega) (\chi+\tilde\lambda)|^2=|\Omega|^2
 \quad a.e. \ \ \chi\in \widehat G\,.$$
   \item[(4)]\label{(4)} For all $f\in L^2(G),$
$$\|f{\bf 1}_\Omega\|^2_{L^2(G)} =|\Omega|^{-1}  \sum_{\tilde\lambda\in \Lambda^\perp}
|\mathcal F_G(f{\bf 1}_\Omega)(\tilde\lambda)|^2 \,.$$
    \item[(5)]\label{(5)} The exponential set $\{e_{\tilde\lambda}: \tilde\lambda\in\Lambda^\perp\}$ is an orthogonal basis for $L^2(\Omega)$.
\end{enumerate}
 \end{theorem}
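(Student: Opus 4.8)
The strategy is to prove $(1)\Leftrightarrow(2)$, $(2)\Leftrightarrow(3)$, $(4)\Leftrightarrow(5)$, $(1)\Rightarrow(5)$ and $(5)\Rightarrow(3)$; since $(3)\Rightarrow(1)$ is already contained in the first two equivalences, these close into a loop and give all five equivalences at once. I fix measurable fundamental domains $Q_\Lambda\subset G$ for $\Lambda$ and $Q_{\Lambda^\perp}\subset\widehat G$ for $\Lambda^\perp$, I use Weil's quotient--integral formula to unfold integrals from $G$ to $G/\Lambda$ and from $\widehat G$ to $\widehat G/\Lambda^\perp$, and I use the normalization identity $|Q_\Lambda|\,|Q_{\Lambda^\perp}|=1$ valid for the dual Haar measures of the excerpt (together with $|\Omega|<\infty$). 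Two elementary remarks are used throughout: first, since $\widehat{G/\Lambda}\cong\Lambda^\perp$, the family $\{e_{\tilde\lambda}:\tilde\lambda\in\Lambda^\perp\}$ is, after division by $|Q_\Lambda|^{1/2}$, an orthonormal basis of $L^2(G/\Lambda)$; second, for any $h$ supported in $\Omega$ one has $\mathcal F_G(h)(\tilde\lambda)=\langle h,e_{\tilde\lambda}\rangle_{L^2(\Omega)}$ and $\|e_{\tilde\lambda}\|_{L^2(\Omega)}^2=|\Omega|$ for every $\tilde\lambda\in\Lambda^\perp$.

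For $(1)\Leftrightarrow(2)$ I observe that tiling is equivalent to \emph{packing}, i.e. $|(\Omega+\lambda)\cap\Omega|=0$ for all $\lambda\in\Lambda\setminus\{0\}$, together with \emph{density}, i.e. $|\Omega|=|Q_\Lambda|$: packing is exactly the pairwise orthogonality of the translates $\{\mathbf 1_\Omega(\cdot-\lambda)\}_{\lambda\in\Lambda}$ (whose common squared norm is $|\Omega|$), while unfolding gives $\int_{Q_\Lambda}\sum_{\lambda}\mathbf 1_\Omega(g-\lambda)\,dg=|\Omega|$, so that the non-negative integer-valued $\Lambda$-periodic function $\sum_{\lambda}\mathbf 1_\Omega(\cdot-\lambda)$ equals $1$ a.e.\ iff it is $\le1$ (packing) and has average $1$ (density). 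For $(2)\Leftrightarrow(3)$ I periodize the Gram matrix: for $\mu\in\Lambda$, Plancherel together with unfolding over $\widehat G/\Lambda^\perp$ (using that $\chi\mapsto\langle\chi,\mu\rangle$ is $\Lambda^\perp$-periodic) gives $\langle\mathbf 1_\Omega(\cdot-\mu),\mathbf 1_\Omega\rangle_{L^2(G)}=\int_{Q_{\Lambda^\perp}}\overline{\langle\chi,\mu\rangle}\,\Phi(\chi)\,d\chi$, where $\Phi(\chi):=\sum_{\tilde\lambda\in\Lambda^\perp}|\mathcal F_G(\mathbf 1_\Omega)(\chi+\tilde\lambda)|^2$ is $\Lambda^\perp$-periodic. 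Since $\{\chi\mapsto\langle\chi,\mu\rangle:\mu\in\Lambda\}$ exhausts $\widehat{\widehat G/\Lambda^\perp}$, the normalized translates $\{|\Omega|^{-1/2}\mathbf 1_\Omega(\cdot-\lambda)\}$ are orthonormal iff all Fourier coefficients of $\Phi$ over $\widehat G/\Lambda^\perp$ vanish except the zeroth, i.e.\ iff $\Phi$ is constant; and as $\int_{Q_{\Lambda^\perp}}\Phi=\|\mathbf 1_\Omega\|_{L^2(G)}^2=|\Omega|$, that constant equals $|\Omega|/|Q_{\Lambda^\perp}|$, which is $|\Omega|^2$ exactly when $|\Omega|=|Q_\Lambda|$. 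Thus (2) and (3) coincide.

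For $(4)\Leftrightarrow(5)$, the two remarks show that (4) asserts precisely that $\{|\Omega|^{-1/2}e_{\tilde\lambda}:\tilde\lambda\in\Lambda^\perp\}$ is a Parseval frame for $L^2(\Omega)$; since each of these vectors has norm one, a Parseval frame of unit vectors is automatically an orthonormal basis (from $1=\sum_\mu|\langle v_\nu,v_\mu\rangle|^2\ge|\langle v_\nu,v_\nu\rangle|^2=1$ one gets $\langle v_\nu,v_\mu\rangle=0$ for $\mu\neq\nu$, and completeness is part of being a frame), so $(4)\Leftrightarrow(5)$, the converse direction $(5)\Rightarrow(4)$ being just Parseval's identity for the orthogonal basis $\{e_{\tilde\lambda}\}$. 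For $(1)\Rightarrow(5)$: if $\Omega$ tiles then it is a measurable fundamental domain with $|\Omega|=|Q_\Lambda|$, hence the $\Lambda$-periodic-extension map $L^2(\Omega)\to L^2(G/\Lambda)$ is a surjective isometry sending $e_{\tilde\lambda}|_\Omega$ to $e_{\tilde\lambda}$; as the latter form an orthogonal basis of $L^2(G/\Lambda)$, the former form one of $L^2(\Omega)$. For $(5)\Rightarrow(3)$: apply the Parseval identity supplied by (5) to $h_\chi:=\overline{e_\chi}\,\mathbf 1_\Omega$ for $\chi\in\widehat G$; since $\|h_\chi\|_{L^2(\Omega)}^2=|\Omega|$ and $\mathcal F_G(h_\chi)(\tilde\lambda)=\mathcal F_G(\mathbf 1_\Omega)(\chi+\tilde\lambda)$, one obtains $\sum_{\tilde\lambda\in\Lambda^\perp}|\mathcal F_G(\mathbf 1_\Omega)(\chi+\tilde\lambda)|^2=|\Omega|^2$ for every $\chi$, and integrating this over $Q_{\Lambda^\perp}$ with Plancherel forces $|\Omega|=|Q_\Lambda|$. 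Chaining $(1)\Rightarrow(5)\Leftrightarrow(4)$ with $(5)\Rightarrow(3)\Rightarrow(1)$ and $(1)\Leftrightarrow(2)\Leftrightarrow(3)$ closes the loop.

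The step I expect to be the main obstacle is the periodization in $(2)\Leftrightarrow(3)$: one must carry the Haar normalizations through Weil's formula simultaneously on $G$ and on $\widehat G$, and verify carefully that $\mu\mapsto(\chi\mapsto\langle\chi,\mu\rangle)$ identifies $\Lambda$ with the full dual of the compact group $\widehat G/\Lambda^\perp$, which is exactly what converts ``all off-diagonal Gram entries vanish'' into ``$\Phi$ is constant.'' The only other non-routine input is the existence of a measurable fundamental domain for a lattice in a second-countable locally compact abelian group, which is standard.
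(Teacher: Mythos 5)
Your proposal is correct, and while several pieces coincide with the paper's argument --- the equivalence $(1)\Leftrightarrow(2)$ via ``packing $+$ density'', the unit-norm Parseval-frame trick for $(4)\Leftrightarrow(5)$, and the substitution $f=\overline{e_\chi}\mathbf{1}_\Omega$ that yields $(3)$ from the Parseval identity --- the implication that closes the loop out of $(1)$ is genuinely different. The paper proves $(1)\Rightarrow(4)$ by applying the Poisson summation formula to $f\mathbf{1}_\Omega\ast\widetilde{f\mathbf{1}_\Omega}$ for $f\in C_c(G)$ and then invoking a density argument to reach general $f\in L^2(G)$; you instead prove $(1)\Rightarrow(5)$ by observing that a tiling set is a measurable fundamental domain up to null sets, so that periodization gives a surjective isometry (up to the constant $|Q_\Lambda|$) from $L^2(\Omega)$ onto $L^2(G/\Lambda)$ carrying $e_{\tilde\lambda}|_\Omega$ to the character $e_{\tilde\lambda}$ of the compact quotient, whence the basis property is inherited from Peter--Weyl. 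This avoids Poisson summation and the density step entirely and is arguably more elementary. Similarly, for $(2)\Leftrightarrow(3)$ the paper quotes the bracket-map machinery of dual integrable representations (Corollary \ref{orthonormal translation system}), whereas you reprove the underlying fact directly by computing the Fourier coefficients of the periodized function $\Phi(\chi)=\sum_{\tilde\lambda\in\Lambda^\perp}|\mathcal F_G(\mathbf{1}_\Omega)(\chi+\tilde\lambda)|^2$ on $\widehat G/\Lambda^\perp$ against the characters indexed by $\Lambda=\Lambda^{\perp\perp}$; this is self-contained and also makes transparent where the constant $|\Omega|^2$ (versus $|\Omega|\,|Q_\Lambda|$) comes from. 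One cosmetic remark: with the paper's convention that $G/\Lambda$ carries the \emph{normalized} Haar measure, the characters $e_{\tilde\lambda}$ are already orthonormal in $L^2(G/\Lambda)$, so the factor $|Q_\Lambda|^{1/2}$ you mention is unnecessary there --- but this affects only normalization constants, not orthogonality or completeness, and your bookkeeping of $|Q_\Lambda|\,|Q_{\Lambda^\perp}|=1$ elsewhere is consistent.
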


The study of the relationship between spectrum sets and tiling pairs
has its roots in a 1974 paper of B. Fuglede (\cite{Fue74}) where he
proved that a set $E \subset {\Bbb R}^d$, $d\geq 1$, of positive
Lebesgue measure, tiles $\Bbb R^d$ by translations with a lattice
$\Lambda$ if and only if $L^2(E)$ has an orthogonal basis of
exponentials indexed by the annihilator of $\Lambda.$ \\

The general statement in ${\Bbb R}^d$, which says that if $E \subset
{\Bbb R}^d$, $d\geq 1$, has positive Lebesgue measure, then $L^2(E)$
has an orthogonal basis of exponentials (not necessary indexed by a
lattice) if and only if $E$ tiles ${\Bbb R}^d$ by translations, has
been known as the Fuglede Conjecture. A variety of results were
proved establishing connections between tiling and orthogonal
exponential bases. See, for example, \cite{LRW00}, \cite{IP98},
\cite{L02}, \cite{KL03} and \cite{KL04}. In 2001, I. Laba proved
that the Fuglede conjecture is true for the union of two intervals
in the plane (\cite{L01}). In 2003, A. Iosevich, N. Katz and T. Tao
(\cite{IKT03}) proved that the Fuglede conjecture holds for convex
planar domains. The conjecture was also proved for the unit cube of
$\Bbb R^d$ in \cite{IP98} and \cite{LRW00}.
\\

In 2004 Terry Tao (\cite{T04}) disproved the Fuglede Conjecture in
dimension $d=5$ and larger, by exhibiting a spectral set in ${\Bbb
R}^{5}$ which does not tile the space by translations. In
\cite{KM06}, M. Kolountzakis and M. Matolcsi also disproved the
reverse implication of the Fuglede Conjecture for dimensions $d=4$
and larger. In \cite{FR06} and \cite{FMM06}, the dimension of
counter-examples was further reduced. In fact, B. Farkas, M.
Matolcsi and P. Mora show in \cite{FMM06} that the Fuglede
conjecture is false in $\RR^3$. The general feeling in the field is
that sooner or later the counter-examples of both implications will
cover all dimensions. However, in \cite{IMP15} the authors  showed
that the Fuglede
Conjecture holds in two-dimensional vector spaces over prime fields.  \\

The extension of B. Fuglede result for lattices in $\Bbb R^d, d\geq
1,$ to second countable LCA groups has been proved by S. Pedersen in
\cite{Pedersen}, where mainly topological arguments are used to
prove that $(1)$ and $(5)$ of Theorem \ref{main} are equivalent. In
the current paper, analytical methods are used instead to prove this
result, while other equivalent conditions are given. We emphasize
that the equivalence of $(2)$ and $(3)$ in Theorem \ref{main} comes
from the theory of dual integrable representations developed in
\cite{HSWW} (see Section \ref{Pre} for details). The authors
strongly believe  that the techniques used in this paper are
suggestive and can lead to more discoveries on the hidden relations
between translations and exponential bases in general.   \\

{\bf Acknowledgement}: D. Barbieri was supported by a Marie Curie
Intra European Fellowship (Prop. N. 626055) within the 7th European
Community Framework Programme.
 D. Barbieri and E. Hern\'andez were supported by Grant MTM2013-40945-P (Ministerio de Econom\'ia y Competitividad, Spain).
A. Mayeli was supported by PSC-CUNY-TRADB-45-446, and by the
Postgraduate Program of Excellence in Mathematics at Universidad
Aut?oma de Madrid from June 19 to July 17, 2014, when this paper
was completed. The authors wish to thank Alex Iosevich for several
interesting conversations regarding this paper. The proof of the
main result is partly influenced by his presentation of the proof of
the Fuglede conjecture for lattice (presented in CANT-CUNY
Conference 15) and his expository paper on this subject
\cite{Iosevich-Expository}

\subsection{Notations and Preliminaries} \label{Pre}

Let  $\Lambda$ be a lattice in a second countable LCA group $G$.
Denote by $Q_\Lambda \subset G$ a measurable cross section of
$G/\Lambda.$  By definition, a cross section is a set of
representatives of all cosets in $G/\Lambda $ such that the
intersection of $Q_\Lambda$ with any coset $g+\Lambda$ has only one
element. Its existence is guaranteed by \cite[Theorem 1]{FG68}. A
cross section is called {\it fundamental domain}  if it is
relatively compact. In our situation, every cross section is a
fundamental domain since $G/\Lambda$ is compact. By the definition,
it is evident that ($Q_\Lambda$, $\Lambda$) is a tiling pair for
$G$. For a lattice $\Lambda$, the {\it lattice size} is defined as
the Haar measure of a fundamental domain
$Q_\Lambda$, i.e., $|Q_\Lambda|$, therefore the measure of any fundamental domain. \\

Let $d\dot g$ be a normalized Haar measure for $G/\Lambda$. Then the
relation between  Haar measure on $G$ and Haar measure for
$G/\Lambda$ is given by the following {\it Weil\rq{}s formula}: for
any function $f\in L^1(G)$,  the periodization map $\Phi(\dot
g)=\sum_{\lambda\in \Lambda} f(g+\lambda), \ \dot g\in G/\Lambda$ is
well defined almost everywhere in $G/\Lambda$, belongs to
$L^1(G/\Lambda),$ and

\begin{align} \label{Weil}
\int_G f(g)dg = |Q_\Lambda| \int_{G/\Lambda} \sum_{\lambda\in\Lambda}  f(g+\lambda) d\dot g.
\end{align}
This formula is a especial case of \cite[Theorem 3.4.6]{RS68}. The
constant $|Q_\Lambda|$ appears in \ref{Weil} because $G/\Lambda$
is equipped with the normalized Haar measure. \\

Here, we shall recall {\it Poisson summation formula} (see \cite[Lemma
6.2.2]{KG} or \cite[Theorem 4.42]{Folland-book}).

\begin{theorem}\label{Poisson-formula-theorem}
Given $f\in L^1(G)$ and a lattice $\Lambda$ in $G$, if $\sum_{\tilde
\lambda\in \Lambda^\perp} |\mathcal F_G
(f)(\tilde\lambda)|^2<\infty$, then the periodization map $\Phi$,
given before (\ref{Weil}), belongs to  $L^2(G/\Lambda)$ and
\begin{align}\label{Poisson-summation-formula}
\sum_{\lambda\in \Lambda} f(g+\lambda) = |Q_\Lambda|^{-1}
\sum_{\tilde\lambda\in \Lambda^\perp} \mathcal F_G
(f)(\tilde\lambda) e_{\tilde\lambda}(g)\quad a.e. \ g\in G.
\end{align}
Here, both series converge in $L^2(G/\Lambda)$. Moreover, if $f\in
C_c(G)$, i.e, continuous and compactly supported, then Poisson
Summation formula (\ref{Poisson-summation-formula})  holds pointwise
(\cite{Folland-book}).
   \end{theorem}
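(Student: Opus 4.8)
The plan is to prove the Poisson summation formula of Theorem \ref{Poisson-formula-theorem} by expanding the periodization $\Phi$ in the orthonormal Fourier basis of the compact group $G/\Lambda$ and identifying the Fourier coefficients with the values of $\mathcal F_G(f)$ on the dual lattice. The key structural fact I would exploit is the isomorphism $\widehat{G/\Lambda}\cong\Lambda^\perp$ noted in the introduction: characters of $G/\Lambda$ are precisely the restrictions to $G$ of the exponentials $e_{\tilde\lambda}$ with $\tilde\lambda\in\Lambda^\perp$, since these are exactly the characters that are $\Lambda$-periodic and hence descend to well-defined functions on the quotient. Under the normalized Haar measure $d\dot g$ on $G/\Lambda$, the family $\{e_{\tilde\lambda}:\tilde\lambda\in\Lambda^\perp\}$ is an orthonormal basis of $L^2(G/\Lambda)$, which is the engine of the whole argument.

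First I would assume the hypothesis $\sum_{\tilde\lambda\in\Lambda^\perp}|\mathcal F_G(f)(\tilde\lambda)|^2<\infty$ and show $\Phi\in L^2(G/\Lambda)$. Since $f\in L^1(G)$, Weil's formula \eqref{Weil} already guarantees $\Phi\in L^1(G/\Lambda)$ with $\int_{G/\Lambda}\Phi\,d\dot g=|Q_\Lambda|^{-1}\int_G f\,dg$. The next step is to compute the Fourier coefficients of $\Phi$ against the orthonormal basis, namely
\begin{align*}
\widehat\Phi(\tilde\lambda)=\int_{G/\Lambda}\Phi(\dot g)\,\overline{e_{\tilde\lambda}(g)}\,d\dot g
=\int_{G/\Lambda}\sum_{\lambda\in\Lambda}f(g+\lambda)\,\overline{\langle\tilde\lambda,g\rangle}\,d\dot g.
\end{align*}
Because $\tilde\lambda\in\Lambda^\perp$, we have $\overline{\langle\tilde\lambda,g\rangle}=\overline{\langle\tilde\lambda,g+\lambda\rangle}$, so the character can be pulled inside the sum and absorbed, and an unfolding of the quotient integral against the $\Lambda$-sum recovers an integral over all of $G$. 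Up to the normalizing constant coming from \eqref{Weil}, this yields $\widehat\Phi(\tilde\lambda)=|Q_\Lambda|^{-1}\mathcal F_G(f)(\tilde\lambda)$. The hypothesis then says exactly that $\sum_{\tilde\lambda}|\widehat\Phi(\tilde\lambda)|^2<\infty$, so by the Riesz--Fischer theorem $\Phi$ coincides with its Fourier series in $L^2(G/\Lambda)$, which is precisely \eqref{Poisson-summation-formula}, with both series converging in $L^2(G/\Lambda)$.

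The main obstacle will be justifying the interchange of summation and integration in the Fourier-coefficient computation and, more delicately, the unfolding step that turns the integral of a $\Lambda$-periodized sum over $G/\Lambda$ into an integral over $G$. The clean way to handle this is to invoke Weil's formula \eqref{Weil} directly on the function $g\mapsto f(g)\overline{\langle\tilde\lambda,g\rangle}$, which lies in $L^1(G)$ since $|\langle\tilde\lambda,g\rangle|=1$; its periodization is exactly $\Phi(\dot g)\,\overline{\langle\tilde\lambda,g\rangle}$ by the $\Lambda^\perp$-invariance just noted, so no ad hoc Fubini argument is needed beyond what \eqref{Weil} already licenses. For the final clause, when $f\in C_c(G)$ the periodization $\Phi$ is continuous and the Fourier series of such functions on the compact group $G/\Lambda$ can be upgraded from $L^2$ to pointwise convergence by the standard summability theory on compact abelian groups; I would simply cite \cite{Folland-book} for this, as the statement already does, rather than reprove absolute convergence by hand.
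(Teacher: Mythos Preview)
The paper does not actually prove Theorem~\ref{Poisson-formula-theorem}: it is merely recalled as a known result, with references to \cite[Lemma 6.2.2]{KG} and \cite[Theorem 4.42]{Folland-book}. So there is no ``paper's own proof'' to compare against.

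Your sketch is the standard argument and is essentially correct. The computation of $\widehat\Phi(\tilde\lambda)=|Q_\Lambda|^{-1}\mathcal F_G(f)(\tilde\lambda)$ via Weil's formula applied to $g\mapsto f(g)\overline{\langle\tilde\lambda,g\rangle}$ is exactly the right way to avoid any delicate Fubini step, and your identification of $\{e_{\tilde\lambda}:\tilde\lambda\in\Lambda^\perp\}$ as an orthonormal basis of $L^2(G/\Lambda)$ under normalized Haar measure is the correct structural input. One small point worth making explicit: Riesz--Fischer only gives you an $L^2$ function $\Psi$ with the prescribed square-summable Fourier coefficients; to conclude $\Phi=\Psi$ a.e.\ you still need the uniqueness theorem for Fourier coefficients of $L^1$ functions on a compact abelian group (i.e., an $L^1$ function with all coefficients zero vanishes a.e.). This is standard, but it is the bridge from ``$\Phi\in L^1$ with $\ell^2$ coefficients'' to ``$\Phi\in L^2$,'' and your sentence ``by the Riesz--Fischer theorem $\Phi$ coincides with its Fourier series'' elides it. For the $C_c(G)$ clause, deferring to \cite{Folland-book} is what the paper itself does.
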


For any function $f:G\to \CC$, we define $\widetilde f(g) :=
\overline{f(-g)}$. And for any given two functions $f$ and $h$
defined on $G$, the convolution $f\ast h$ is given by
 $$f\ast h(g) = \int_G f(g_1) h(g-g_1) dg_1,$$
 provided that the integral exists.

\begin{definition}[Dual integrable representations (\cite{HSWW})]
Let $G$ be an LCA group, and  let $\pi$ be a unitary representation
of $G$ on a Hilbert space $\mathcal H$ with inner product $\langle \
, \ \rangle$. We say $\pi$ is {\it dual integrable} if there exists
a Haar measure $d\chi$ on $\widehat G$ and a function $[\cdot,
\cdot]_\pi: \mathcal H\times \mathcal H\to L^1(\widehat G)$, called
{\it bracket map} for $\pi,$ such that for all $  \phi, \psi\in
\mathcal H$
$$\langle \phi, \pi(g)\psi\rangle =
\int_{\widehat G} [\phi, \psi]_\pi(\chi) e_{-g}(\chi) d\chi \quad \ \  \forall \ g\in G.
$$
\end{definition}

\begin{example}\label{translation}
Let $\Lambda$ be a lattice in a second countable LCA group $G$. Let
$\phi\in L^2(G)$.  For any $\lambda\in \Lambda$, define
$T_\lambda(\phi)(g)= \phi(g-\lambda)$. We show that $T$ is a dual
integrable representation. Let $M_\lambda h(\chi):=
e_{\lambda}(\chi) h(\chi)$, and let $Q_{\Lambda^\perp}$ be a
fundamental domain for the annihilator lattice $\Lambda^\perp$.
Thus, by an application of Parseval identity and Weil\rq{}s formula
(\ref{Weil}) we have

\begin{align} \langle \phi, T_\gamma\psi\rangle &= \langle \mathcal F_G (\phi) , M_\gamma \mathcal F_G (\psi)\rangle\\\notag
& = \int_{\widehat G} \mathcal F_G (\phi)(\chi) \overline{\mathcal
F_G (\psi)(\chi)} e_{-\gamma}(\chi) d\chi \\\notag
&=|Q_{\Lambda^\perp}| \int_{\widehat G/{\Lambda^\perp}}
\sum_{\tilde\lambda\in\Lambda^\perp} \mathcal F_G (\phi)(\chi+\tilde
\lambda) \overline{\mathcal F_G (\psi)(\chi+\tilde \lambda)}
e_{-\lambda}(\chi+\tilde\lambda) d\dot\chi\\\notag &=
|Q_{\Lambda^\perp}| \int_{\widehat G/{\Lambda^\perp}}
\sum_{\tilde\lambda\in\Lambda^\perp} \mathcal F_G (\phi)(\chi+\tilde
\lambda) \overline{\mathcal F_G (\psi)(\chi+\tilde \lambda)}
e_{-\lambda}(\chi) d\dot\chi.
 \end{align}

 Now take

 $$[\phi, \psi]_T(\dot\chi): = |Q_{\Lambda^\perp}| \sum_{\tilde \lambda\in \Lambda^\perp}
 \mathcal F_G (\phi)(\chi+\tilde\lambda)\overline{\mathcal F_G
 (\psi)(\chi+\tilde\lambda)}\,
 \qquad a. e. \quad \dot\chi\in \widehat G/{\Lambda^\perp} \ .$$

 Then, $T$ is a dual integrable representation with $\mathcal H=L^2(G)$ and the bracket function $[\cdot ,
 \cdot]_T$, which belongs to $L^1(\widehat G/\Lambda^\perp)$ because
 $ \mathcal F_G(\phi) \overline{\mathcal F_G (\psi)} \in L^1(\widehat G).$
\end{example}

One of practical application of dual integrable representations is
that one can characterize bases in terms of their associated bracket
functions. For example, the following result has been given in \cite[Proposition 5.1]{HSWW}.

 \begin{theorem}\label{Result in HSWW}
 Let $G$ be a countable abelian group and $\pi$  be a dual
integrable unitary representation of $G$ on a Hilbert space
$\mathcal H$. Let $\phi\in \mathcal H$. Then the system
$\{\pi(g)\phi: g\in G\}$ is an orthonormal basis for its spanned
vector space if and only if $[\phi, \phi]_\pi(\chi)= 1$ for almost
every $\chi\in \widehat G$.
 \end{theorem}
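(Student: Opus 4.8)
The plan is to reduce both the orthonormality of $\{\pi(g)\phi : g\in G\}$ and the equation $[\phi,\phi]_\pi=1$ to assertions about the Fourier coefficients of the single function $F:=[\phi,\phi]_\pi\in L^1(\widehat G)$ on the compact dual group $\widehat G$, and then to link the two through the uniqueness theorem for the Fourier transform on $L^1(\widehat G)$. Since $G$ is countable and discrete, $\widehat G$ is compact, and with the Haar measure $d\chi$ normalized so that $\mathcal F_G$ is an isometry (so that $|\widehat G|=1$), the characters $\{e_{-g}:g\in G\}$ of $\widehat G$ are orthonormal; concretely $\int_{\widehat G} e_k(\chi)\,d\chi=\delta_{k,0}$ for every $k\in G$. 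This orthogonality relation is the only special fact about $\widehat G$ that I will need.

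First I would compute the Gram entries of the system using unitarity of $\pi$, via $\pi(g)^{*}=\pi(-g)$. For $g,h\in G$,
$$\langle \pi(g)\phi,\pi(h)\phi\rangle=\langle \phi,\pi(h-g)\phi\rangle,$$
and dual integrability of $\pi$ converts the right-hand side into an integral against a character,
$$\langle \pi(g)\phi,\pi(h)\phi\rangle=\int_{\widehat G} F(\chi)\,e_{g-h}(\chi)\,d\chi,$$
which is, up to the sign convention, the $(g-h)$-th Fourier coefficient of $F$. Thus the entire Gram matrix is governed by the one function $F$, and orthonormality of $\{\pi(g)\phi\}$ is equivalent to requiring these coefficients to equal $\delta_{g,h}$; writing $k=g-h$ this reads
$$\int_{\widehat G} F(\chi)\,e_k(\chi)\,d\chi=\delta_{k,0}\qquad \text{for all } k\in G.$$

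For the converse direction this is immediate: if $F\equiv 1$, the displayed integrals collapse to $\int_{\widehat G} e_k\,d\chi=\delta_{k,0}$ by orthogonality of characters, so the system is orthonormal. The forward direction is where the single genuine analytic input appears. The identity above says exactly that $F$ and the constant function $\mathbf 1$ have identical Fourier coefficients. Because $F$ is only assumed to lie in $L^1(\widehat G)$ and not in $L^2$, I cannot invoke $L^2$ completeness or Parseval; instead I will appeal to the uniqueness theorem for the Fourier transform on $L^1$ of a compact abelian group, which guarantees that an $L^1$ function all of whose Fourier coefficients vanish is zero almost everywhere. Applying this to $F-\mathbf 1$ yields $[\phi,\phi]_\pi=1$ a.e.

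Finally I would observe that the phrase ``orthonormal basis for its spanned vector space'' adds no content: an orthonormal system is by definition an orthonormal basis of the closure of its linear span, so the equivalence to be proved is precisely the equivalence between orthonormality of $\{\pi(g)\phi\}$ and $[\phi,\phi]_\pi=1$. The step I expect to demand the most care is the bookkeeping of the Haar-measure normalization: the assertion ``$[\phi,\phi]_\pi=1$'' (rather than equality to some other constant) is meaningful only for the specific $d\chi$ entering the definition of dual integrability, which under the standing Plancherel convention of the paper is the normalized Haar measure on $\widehat G$ for which the characters are orthonormal. Verifying this consistency is exactly what forces the constant on the right-hand side to be $1$.
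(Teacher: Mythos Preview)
The paper does not supply its own proof of this theorem; it merely quotes the result from \cite[Proposition~5.1]{HSWW}. So there is no in-paper argument to compare against, and the relevant question is simply whether your proposal is sound. It is.

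Your reduction is exactly the natural one and matches what one finds in the cited reference: unitarity gives $\langle \pi(g)\phi,\pi(h)\phi\rangle=\langle \phi,\pi(h-g)\phi\rangle$, dual integrability rewrites this as $\int_{\widehat G}[\phi,\phi]_\pi(\chi)\,e_{g-h}(\chi)\,d\chi$, and so orthonormality of the orbit is equivalent to the statement that $F:=[\phi,\phi]_\pi\in L^1(\widehat G)$ and the constant function $\mathbf 1$ have the same Fourier coefficients on the compact group $\widehat G$. The uniqueness theorem for the Fourier transform on $L^1$ then forces $F=\mathbf 1$ a.e.; the reverse implication is just orthogonality of characters. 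Your remark that ``orthonormal basis for its spanned vector space'' is automatic once orthonormality is established is correct, and your attention to the Haar-measure normalization (that $|\widehat G|=1$ under the convention making $\mathcal F_G$ an isometry from $\ell^2(G)$, which is what pins the constant at $1$ rather than some other value) is the only place where care is genuinely needed. Nothing is missing.
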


 As a  byproduct of the preceding theorem we have the following result.

\begin{corollary}\label{orthonormal translation system}
Let $T$ and $\Lambda$ be   as in Example   \ref{translation}.   Let
$\phi\in L^2(G)$. Then   the translations system $\{T_\lambda \phi:
\lambda\in \Lambda\}$ is an orthonormal system in $L^2(G)$ if and
only if
 $$\sum_{\tilde\lambda\in \Lambda^\perp} |\mathcal F_G(\phi) (\chi+\tilde\lambda)|^2=
 |Q_{\Lambda^\perp}|^{-1} \quad  a.e. \ \ \chi\in \widehat
 G\,.
 $$
\end{corollary}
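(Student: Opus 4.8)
The plan is to deduce Corollary \ref{orthonormal translation system} directly from Theorem \ref{Result in HSWW} by specializing it to the translation representation $T$ from Example \ref{translation}. First I would note that Theorem \ref{Result in HSWW} applies because $\Lambda$ is a countable abelian group (being a discrete subgroup of a second countable LCA group) and $T$ is a dual integrable unitary representation of $\Lambda$ on $\mathcal H = L^2(G)$, as established in Example \ref{translation}. Hence $\{T_\lambda\phi : \lambda\in\Lambda\}$ is an orthonormal basis for its closed span — equivalently, an orthonormal \emph{system} — if and only if $[\phi,\phi]_T(\dot\chi) = 1$ for almost every $\dot\chi\in\widehat G/\Lambda^\perp$.

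The remaining step is to unwind the definition of the bracket map computed in Example \ref{translation}: taking $\psi = \phi$ there gives
\begin{align*}
[\phi,\phi]_T(\dot\chi) = |Q_{\Lambda^\perp}|\sum_{\tilde\lambda\in\Lambda^\perp} |\mathcal F_G(\phi)(\chi+\tilde\lambda)|^2 .
\end{align*}
Setting this equal to $1$ almost everywhere and dividing by $|Q_{\Lambda^\perp}|$ yields exactly
\begin{align*}
\sum_{\tilde\lambda\in\Lambda^\perp} |\mathcal F_G(\phi)(\chi+\tilde\lambda)|^2 = |Q_{\Lambda^\perp}|^{-1}\quad a.e.\ \chi\in\widehat G,
\end{align*}
which is the claimed equivalence. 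One should observe that the function $\chi\mapsto\sum_{\tilde\lambda\in\Lambda^\perp}|\mathcal F_G(\phi)(\chi+\tilde\lambda)|^2$ is genuinely $\Lambda^\perp$-periodic, so the statement "a.e. $\chi\in\widehat G$" and the statement "a.e. $\dot\chi\in\widehat G/\Lambda^\perp$" are interchangeable.

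There is essentially no obstacle here: the corollary is a transparent translation of the general bracket-map criterion into the concrete language of Fourier transforms and periodizations. The only points requiring a word of care are (i) confirming that "orthonormal system" and "orthonormal basis for its span" mean the same thing, which is immediate, and (ii) making sure the measure-theoretic identification between functions on $\widehat G/\Lambda^\perp$ and $\Lambda^\perp$-periodic functions on $\widehat G$ is invoked correctly, which is standard. I would therefore present the proof as a short two-line deduction, citing Theorem \ref{Result in HSWW} and the bracket computation of Example \ref{translation}.
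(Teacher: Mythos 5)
Your proposal is correct and is exactly the argument the paper intends: the corollary is stated as a direct byproduct of Theorem \ref{Result in HSWW} applied to the translation representation of $\Lambda$, using the bracket $[\phi,\phi]_T(\dot\chi)=|Q_{\Lambda^\perp}|\sum_{\tilde\lambda\in\Lambda^\perp}|\mathcal F_G(\phi)(\chi+\tilde\lambda)|^2$ computed in Example \ref{translation}. Your added remarks on the equivalence of ``orthonormal system'' with ``orthonormal basis of its span'' and on the identification of $\Lambda^\perp$-periodic functions with functions on $\widehat G/\Lambda^\perp$ are the right points of care and match the paper's (implicit) reasoning.
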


We need a couple of Lemmata that will also be used in the proof of
Theorem \ref{main}.

\begin{lemma}\label{measure of Omega}
Let $\Omega$ tile $G$ by lattice $\Lambda$. Let $Q_\Lambda$ denote a
fundamental domain for  $\Lambda$. Then $|\Omega|=|Q_\Lambda|$.
\end{lemma}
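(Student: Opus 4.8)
The plan is to integrate the tiling identity (\ref{tiling-property}) against Haar measure over a single fundamental domain and identify both sides. Since every summand ${\bf 1}_\Omega(g-\lambda)={\bf 1}_{\Omega+\lambda}(g)$ is nonnegative, Tonelli's theorem applies at each step, so we do not need to assume in advance that ${\bf 1}_\Omega\in L^1(G)$; the finiteness of $|\Omega|$ will instead be an output of the computation.

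First I would fix a fundamental domain $Q_\Lambda$ for $\Lambda$ and integrate (\ref{tiling-property}) over $Q_\Lambda$ with respect to $dg$. The right-hand side is simply $\int_{Q_\Lambda}1\,dg=|Q_\Lambda|$. For the left-hand side, interchanging the sum and the integral gives $\sum_{\lambda\in\Lambda}\int_{Q_\Lambda}{\bf 1}_\Omega(g-\lambda)\,dg$, and using ${\bf 1}_\Omega(g-\lambda)={\bf 1}_{\Omega+\lambda}(g)$ together with translation invariance of Haar measure, each term equals $|(\Omega+\lambda)\cap Q_\Lambda|=|\Omega\cap(Q_\Lambda-\lambda)|$. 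Because $Q_\Lambda$ is a cross section for $G/\Lambda$, the translates $\{Q_\Lambda-\lambda\}_{\lambda\in\Lambda}$ cover $G$ and pairwise intersect in null sets, hence they partition $\Omega$ up to a null set and $\sum_{\lambda\in\Lambda}|\Omega\cap(Q_\Lambda-\lambda)|=|\Omega|$. Combining the two evaluations yields $|\Omega|=|Q_\Lambda|$.

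Equivalently, one can read off the result in one line from Weil's formula (\ref{Weil}) applied to $f={\bf 1}_\Omega$: by (\ref{tiling-property}) the periodization of ${\bf 1}_\Omega$ is the constant function $1$ on $G/\Lambda$, so $\int_G{\bf 1}_\Omega\,dg=|Q_\Lambda|\int_{G/\Lambda}1\,d\dot g=|Q_\Lambda|$ since $d\dot g$ is normalized. The only point to be slightly careful about is that (\ref{Weil}) is quoted here under the hypothesis $f\in L^1(G)$, which is exactly what the Tonelli version of the argument avoids; apart from that, the proof is just bookkeeping with the partition properties of $Q_\Lambda$ and the normalization of the Haar measure on $G/\Lambda$, and there is no real obstacle.
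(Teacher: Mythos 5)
Your proposal is correct and essentially coincides with the paper's proof: the paper applies Weil's formula (\ref{Weil}) to $f={\bf 1}_\Omega$, notes that the tiling hypothesis makes the periodization identically $1$, and concludes $|\Omega|=|Q_\Lambda|$ from the normalization of $d\dot g$ --- exactly your second paragraph. Your first paragraph is just this same computation unfolded by hand over $Q_\Lambda$, with the small bonus that Tonelli lets you avoid assuming ${\bf 1}_\Omega\in L^1(G)$ in advance (a hypothesis the paper's invocation of Weil's formula tacitly uses).
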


\begin{proof}   Let $f= {\bf 1}_\Omega$ be the characteristic function of  $\Omega$.
 By Weil\rq{}s formula (\ref{Weil}) we have:

 \begin{align} |\Omega| = \int_G {\bf 1}_\Omega(g) dg =
 |Q_{\Lambda}| \int_{G/\Lambda}  \sum_{\lambda\in \Lambda}{\bf 1}_\Omega(g+\gamma) d\dot g
 \end{align}
By the assumption, $\Omega$ tiles $G$ by $\Lambda$-translations.
Therefore,

 $$\sum_{\lambda\in \Lambda}{\bf 1}_\Omega(g+\gamma) =1, \quad  a.e.  g\in G. $$
  Thus
 \begin{align} |\Omega| =  |Q_{\Lambda}|  \int_{G/\Lambda}    1 d\dot g = |Q_\Lambda|,
 \end{align}
 since $d\dot g$ is a normalized Haar measure for $G/\Lambda$. This proves our  lemma.
 \end{proof}

 We shall recall the following lemma whose proof has been shown in \cite[Lemma 6.2.3]{KG}.

 \begin{lemma} \label{product}
 Let $\Lambda$ be a lattice in  $G$. Then the annihilator $\Lambda^\perp$ is a
 lattice in $\widehat G$ and $$|Q_\Lambda| |Q_{\Lambda^\perp}|=1 ~.$$
 \end{lemma}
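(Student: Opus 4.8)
Since the excerpt has already shown that $\Lambda^\perp$ is a discrete, co-compact subgroup of $\widehat G$ --- i.e.\ a lattice --- only the numerical identity $|Q_\Lambda|\,|Q_{\Lambda^\perp}|=1$ needs an argument. The plan is to apply the Poisson Summation formula (Theorem~\ref{Poisson-formula-theorem}) to one and the same test function on both sides of the Fourier transform: once on $G$ with the lattice $\Lambda$, and once on $\widehat G$ with the lattice $\Lambda^\perp$, which is licit because $(\Lambda^\perp)^\perp=\Lambda$ under the canonical identification $\widehat{\widehat G}\cong G$. The point is that these two instances of Poisson summation relate $\sum_{\lambda\in\Lambda}\phi(\lambda)$ to $\sum_{\tilde\lambda\in\Lambda^\perp}\mathcal F_G(\phi)(\tilde\lambda)$ with the reciprocal constants $|Q_\Lambda|^{-1}$ and $|Q_{\Lambda^\perp}|^{-1}$, so comparing them forces $|Q_\Lambda|\,|Q_{\Lambda^\perp}|=1$.

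In detail, I would fix a sufficiently regular $\phi$ on $G$ with $\sum_{\lambda\in\Lambda}\phi(\lambda)\neq 0$ --- for $G=\RR^d$ a non-negative smooth compactly supported function with $\phi(0)>0$, and in general a Schwartz--Bruhat function; the regularity guarantees that $\mathcal F_G(\phi)$ is rapidly decreasing and that the \emph{pointwise} Poisson formula applies both to $\phi$ on $G$ (with $\Lambda$) and to $F:=\mathcal F_G(\phi)$ on $\widehat G$ (with $\Lambda^\perp$). Evaluating (\ref{Poisson-summation-formula}) for $\phi$ and $\Lambda$ at $g=0$ gives
\begin{align*}
\sum_{\lambda\in\Lambda}\phi(\lambda)=|Q_\Lambda|^{-1}\sum_{\tilde\lambda\in\Lambda^\perp}\mathcal F_G(\phi)(\tilde\lambda).
\end{align*}
Evaluating (\ref{Poisson-summation-formula}) for $F=\mathcal F_G(\phi)$ and $\Lambda^\perp$ at the trivial character, and using the inversion identity $\mathcal F_{\widehat G}(\mathcal F_G(\phi))(g)=\phi(-g)$ together with $\Lambda=-\Lambda$, gives
\begin{align*}
\sum_{\tilde\lambda\in\Lambda^\perp}\mathcal F_G(\phi)(\tilde\lambda)=|Q_{\Lambda^\perp}|^{-1}\sum_{\lambda\in\Lambda}\phi(\lambda).
\end{align*}
Substituting the second relation into the first and cancelling the nonzero factor $\sum_{\lambda\in\Lambda}\phi(\lambda)$ yields $1=|Q_\Lambda|^{-1}|Q_{\Lambda^\perp}|^{-1}$, as desired. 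Apart from Poisson summation, this uses only Plancherel (through Fourier inversion) and Pontryagin duality.

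The step I expect to be the main obstacle is the \emph{second} use of Poisson summation: Theorem~\ref{Poisson-formula-theorem} asserts the pointwise identity only for $f\in C_c$, whereas $\mathcal F_G(\phi)$ is never compactly supported when $\phi$ is, so one must know the pointwise identity still holds for $\mathcal F_G(\phi)$. This is precisely why $\phi$ must be drawn from a class invariant under the Fourier transform and controlled along lattices --- the Schwartz--Bruhat space --- rather than merely from $C_c(G)$. If one wishes to avoid invoking that class, one can instead keep the $L^2(\widehat G/\Lambda^\perp)$ form of (\ref{Poisson-summation-formula}) for $\mathcal F_G(\phi)$ and $\Lambda^\perp$, observe that its right-hand side is a finite trigonometric sum (hence continuous) because $\phi$ has compact support, note that its left-hand side is the $\Lambda^\perp$-periodization of the rapidly decreasing function $\mathcal F_G(\phi)$ (hence also continuous), and conclude that the two sides agree at every point, in particular at the trivial character. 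Either way the identity is, of course, exactly \cite[Lemma~6.2.3]{KG}, which could also simply be quoted here.
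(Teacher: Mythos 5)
The paper itself offers no proof of this lemma: it is quoted verbatim from \cite[Lemma 6.2.3]{KG}. Your argument is therefore a genuine (and correct) reconstruction, and it is in fact the standard one --- the same double application of Poisson summation that underlies the cited reference: the two reciprocal constants $|Q_\Lambda|^{-1}$ and $|Q_{\Lambda^\perp}|^{-1}$ multiply to $1$ because the two summation formulas relate the same pair of lattice sums. Your bookkeeping is right (at $g=0$ the first instance gives $\sum_{\lambda}\phi(\lambda)=|Q_\Lambda|^{-1}\sum_{\tilde\lambda}\mathcal F_G(\phi)(\tilde\lambda)$; the second, applied to $\mathcal F_G(\phi)$ on $\widehat G$ with $(\Lambda^\perp)^\perp=\Lambda$ and Fourier inversion, gives the reverse relation with $|Q_{\Lambda^\perp}|^{-1}$), and you correctly identify the one delicate point: Theorem \ref{Poisson-formula-theorem} gives the pointwise identity only for $C_c$, while $\mathcal F_G(\phi)$ is never in $C_c(\widehat G)$. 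Two small remarks on how you patch this. First, to even invoke Theorem \ref{Poisson-formula-theorem} for $F=\mathcal F_G(\phi)$ you need $F\in L^1(\widehat G)$, which fails for generic $\phi\in C_c(G)$; your Schwartz--Bruhat choice handles this (that space is Fourier-invariant with lattice-summable decay, so both pointwise identities hold), but if one wants to stay inside the paper's toolkit the cheaper fix is $\phi=\psi\ast\widetilde\psi$ with $\psi\in C_c(G)$ supported in a neighborhood $U$ of the identity with $(U-U)\cap\Lambda=\{0\}$, so that $F=|\mathcal F_G(\psi)|^2\ge 0$ lies in $L^1(\widehat G)$ and $\sum_\lambda\phi(\lambda)=\phi(0)=\|\psi\|_2^2>0$. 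Second, in your fallback (avoiding Schwartz--Bruhat) you still assert that the $\Lambda^\perp$-periodization of $\mathcal F_G(\phi)$ is continuous because $\mathcal F_G(\phi)$ is ``rapidly decreasing''; for a mere $\phi\in C_c(G)$ that decay is not available, so the fallback does not fully escape the regularity hypothesis --- you still need uniform convergence of the periodized series from somewhere. With the Schwartz--Bruhat choice, the proof is complete and self-contained, which is more than the paper provides.
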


\section{Proof of main Theorem - Theorem \ref{main}}

  In the rest, we shall prove Theorem \ref{main}.
 \begin{proof}

``(1)$\Leftrightarrow$(2)\rq\rq{}. The implication
``(1)$\Rightarrow$(2)\rq\rq{} is trivial since $\Omega$ tiles $G$ by
$\Lambda$-translations. We now prove ``(2)$\Rightarrow$(1)\rq\rq{}.
By the orthogonality of translations,
  for any distinct $\lambda_1  $ and  $\lambda_2\in \Lambda,$
$$
0=\langle {\bf 1}_\Omega(\cdot-\lambda_1),
{\bf 1}_\Omega(\cdot-\lambda_2)\rangle_{L^2(G)}= |\Omega+\lambda_1\cap \Omega+\lambda_2| .
$$
This indicates that the  translations of $\Omega$ by  $\lambda_1\neq
\lambda_2$ have  an intersection whose measure is zero. It remains
to show that the all $\lambda$-translations of $\Omega$, $\lambda\in
\Lambda$, cover the whole group $G$ almost everywhere. Let
$F:=\dot\cup_{\lambda\in \Lambda} \Omega+\lambda$, where the
$\dot\cup$ is used to denote almost disjoint union. Then
$$\sum_{\lambda\in \Lambda} {\bf 1}_\Omega(x-\lambda)={\bf 1}_F(x)\quad a.e. \ x\in F .$$

Let  $E=G\setminus F$. We shall prove that $|E|=0$. Let $Q_\Lambda$
be any fundamental set for $\Lambda$. We prove that $|F\cap
Q_\Lambda| = |Q_\Lambda|$:

\begin{align}
|F\cap Q_\Lambda|= \int_G {\bf 1}_{F\cap Q_\Lambda} (x) dx &= \int_G {\bf 1}_{Q_\Lambda}(x) {\bf 1}_F(x) dx\\\notag
 &= \int_G {\bf 1}_{Q_\Lambda}(x) \left( \sum_{\lambda\in \Lambda} {\bf 1}_\Omega(x-\lambda)
 \right) dx\\\notag
 &= \sum_{\lambda\in \Lambda} \int_{Q_\Lambda} {\bf 1}_\Omega(x-\lambda) dx\\\notag
 &= \sum_{\lambda\in \Lambda} \int_{\lambda+Q_{\Lambda}}  {\bf 1}_\Omega(y)dy\\\notag
 &= \int_G  {\bf 1}_\Omega(y)dy = |\Omega| = |Q_\Lambda| .
\end{align}

From the above calculations, we conclude that  $F\cap Q_\Lambda =
Q_\Lambda$ up to a set of measure zero, since $Q_\Lambda$ has finite
measure. Similarly, one can show that for all $\lambda\in \Lambda$,
$F\cap (Q_\Lambda+\lambda) = Q_\Lambda+\lambda$ up to a measure zero
set.  Since $G$ is a countable union of the pairwise disjoint sets
$Q_\Lambda + \lambda, \lambda \in \Lambda,$ the set $E= G\setminus
F$ is a disjoint countable union of sets of measure zero. Hence, $E$
is
a set of measure zero. This completes the proof. \\

``(2)$\Leftrightarrow$(3)\rq\rq{}. Follows from Corollary
\ref{orthonormal translation system}
 and Lemmata \ref{measure of Omega} and \ref{product}.  \\

``(4) $\Leftrightarrow$ (5)\rq\rq{}. The implication ``(5)
$\Rightarrow$ (4)\rq\rq{} is Parseval identity for the exponential
basis $\{e_{\tilde\lambda}{\bf 1}_\Omega : \tilde \lambda \in
\Lambda^\perp \}$. Let us prove now ``(4) $\Rightarrow$ (5)\rq\rq{}.
Let $h\in L^2(\Omega)$  be such that $\langle h , e_{\tilde \lambda}
{\bf 1}_\Omega\rangle=0$ for all $\tilde\lambda\in \Lambda^\perp$.
Then
$$
\mathcal F_G(h{\bf 1}_\Omega)(\tilde\lambda)=\int_\Omega  h(g)
\overline{e_{\tilde \lambda}(g)} dg   = \langle h ,  e_{\tilde
\lambda}{\bf 1}_\Omega\rangle = 0\,.
$$
Therefore, by  the  equation in condition (4), $h=0$
a.e..  This proves the completeness of the exponentials $\{
e_{\tilde \lambda}\}_{\Lambda^\perp}$ in $L^2(\Omega)$.  To show
that the exponentials are orthogonal,  fix $\tilde\lambda_0\in
\Lambda^\perp$ and put  $f(g) := e_{\tilde\lambda_0}(g){\bf
1_\Omega}(g)$.  Once again, by condition (4) we have

\begin{align}\notag
|\Omega|= \|f\|^2_{L^2(G)}&=|\Omega|^{-1} \sum_{\tilde\lambda\in \Lambda^\perp} |\mathcal F_G(e_{\tilde\lambda_0}{\bf 1}_\Omega)(\tilde\lambda)|^2\\\notag
&=  |\Omega|^{-1}
\sum_{\tilde\lambda\in \Lambda^\perp} |\langle e_{\tilde\lambda_0}, e_{\tilde\lambda}\rangle_{L^2(\Omega)}|^2 \\\notag
&= |\Omega|   +
 |\Omega|^{-1}\sum_{\tilde\lambda\neq \tilde\lambda_0} |\langle e_{\tilde\lambda_0}, e_{\tilde\lambda}\rangle_{L^2(\Omega)}|^2.
\end{align}
 This implies
 that
  $$\langle e_{\tilde\lambda_0}, e_{\tilde\lambda}\rangle_{L^2(\Omega)}=0  \quad \quad \forall \  \tilde\lambda\neq \tilde\lambda_0,  $$
and this takes care of the orthogonality of the exponentials. \\

``(4)$\Rightarrow$(3)\rq\rq{}. Fix  $\chi\in \widehat G$ and define
$f(g) = \overline{\chi(g)}   {\bf 1}_\Omega(g)$,  $g\in G$. Then  we
have
$$
\mathcal F_G(f)(\tilde \lambda) = \mathcal F_G({\bf 1}_\Omega)(\tilde \lambda+
  \chi), \quad  \forall \ \tilde\lambda\in \Lambda^\perp\,.
$$
Thus  for $f$, by the condition (4), we obtain
$$
\sum_{\tilde\lambda\in \Lambda^\perp} |\mathcal F_G({\bf 1}_\Omega)(\tilde \lambda+
  \chi)|^2=
   |\Omega| \|\overline{\chi(g)}{\bf 1}_\Omega\|^2 = |\Omega|^2 ~.
  $$
\\

``(1)$\Rightarrow$(4)\rq\rq{}. Let  $f\in C_c(G)$, be continuous and
compactly supported. We can write the following:

\begin{align}\notag 
\sum_{\tilde\lambda\in \Lambda^\perp} |\mathcal F_G(f{\bf
1}_\Omega)(\tilde\lambda)|^2 &=  \sum_{\tilde\lambda\in
\Lambda^\perp}  \mathcal F_G(f{\bf 1}_\Omega)(\tilde\lambda)
\overline{\mathcal F_G(f{\bf 1}_\Omega(\tilde\lambda) }\\\notag &=
\sum_{\tilde\lambda\in \Lambda^\perp}  \mathcal F_G(f{\bf
1}_\Omega)(\tilde\lambda) \mathcal F_G(\widetilde{f{\bf
1}_\Omega})(\tilde\lambda)\\\notag &= \sum_{\tilde\lambda\in
\Lambda^\perp}  \mathcal F_G(f{\bf 1}_\Omega\ast  \widetilde{f{\bf
1}_\Omega})(\tilde\lambda) .
\end{align}

Since $f\in C_c(G)$, then   $f{\bf 1}_\Omega\ast \widetilde{f{\bf
1}_\Omega} \in C_c(G)$. Then  Poisson summation  formula (see
Theorem \ref{Poisson-formula-theorem}) for $f{\bf 1}_\Omega\ast
f{\bf 1}_\Omega$ at $g=e$ implies that
$$
\sum_{\tilde\lambda\in \Lambda^\perp}  \mathcal F_G(f{\bf 1}_\Omega\ast
\widetilde{f{\bf 1}_\Omega})(\tilde\lambda) = |\Omega| \sum_{\lambda\in \Lambda}
(f{\bf 1}_\Omega\ast  \widetilde{f{\bf 1}_\Omega})(\lambda) ~.$$

Therefore,
\begin{align}\notag
\sum_{\tilde\lambda\in \Lambda^\perp} |\mathcal F_G(f{\bf
1}_\Omega)(\tilde\lambda)|^2
    &=|\Omega| \sum_{\lambda\in \Lambda}   (f{\bf 1}_\Omega\ast  \widetilde{f{\bf 1}_\Omega})(\lambda) \\\notag
        &=  |\Omega|\sum_{\lambda\in \Lambda}    \int_G (f{\bf 1}_\Omega)(g)  \widetilde{(f{\bf 1}_\Omega)}(\lambda-g)dg \\\notag
      &= |\Omega| \sum_{\lambda\in \Lambda}    \int_G (f{\bf 1}_\Omega)(g)  \overline{(f{\bf 1}_\Omega)(g-\lambda)} dg \\\notag
      &= |\Omega| \sum_{\lambda\in \Lambda}    \int_{\Omega\cap \Omega+\lambda}   f(g)  \overline{f(g-\lambda)} dg\,. \\\notag
\end{align}
By the assumption (1) of the theorem,  $|\Omega\cap
\Omega+\lambda|=0$ for $\lambda\neq e$. Therefore,

\begin{align}\notag
  |\Omega|^{-1}  \sum_{\tilde\lambda\in \Lambda^\perp} |\mathcal F_G(f{\bf 1}_\Omega)(\tilde\lambda)|^2 =
   \int_{\Omega}   f(g)  \overline{f}(g) dg
  =
   \int_{G}   |f{\bf 1}_\Omega(g)|^2  dg =  \|f{\bf 1}_\Omega\|_{L^2(G)}^2
   ~.
\end{align}
This shows (4) for $f\in C_c(G)$. Use a density argument to prove
the result for general $f\in L^2(G)$.

\end{proof}

\

\textsf{D. Barbieri}, Universidad Aut\'onoma de Madrid, 28049
Madrid, Spain.

\textsf{davide.barbieri@uam.es}

\

\textsf{E. Hern\'andez},  Universidad Aut\'onoma de Madrid, 28049
Madrid, Spain.

\textsf{eugenio.hernandez@uam.es}

\

\textsf{A. Mayeli}, City University of New York, Queensborough.

 \textsf{amayeli@gc.cuny.edu}

  \end{document}